\def\BibTeX{{\rm B\kern-.05em{\sc i\kern-.025em b}\kern-.08em
    T\kern-.1667em\lower.7ex\hbox{E}\kern-.125emX}}
\theoremstyle{definition}
\newtheorem{theorem}{Theorem}
\theoremstyle{definition}
\theoremstyle{definition}
\theoremstyle{definition}
\theoremstyle{definition}
\theoremstyle{definition}
\begin{document}

\title{Singular Perturbation Theory for a Finite-Dimensional, Discrete-Time Chi Nonlinear System}

\author{\IEEEauthorblockN{Xiaofan Cui and Al-Thaddeus Avestruz}
\thanks{*Xiaofan Cui and Al-Thaddeus Avestruz are with the Department
of Electrical Engineering and Computer Science, University
of Michigan, Ann Arbor, MI 48109, USA cuixf@umich.edu, avestruz@umich.edu.}
}
\IEEEoverridecommandlockouts
\IEEEpubid{\makebox[\columnwidth]{\hfill} \hspace{\columnsep}\makebox[\columnwidth]{ }}
\maketitle
\IEEEpubidadjcol

\begin{abstract}
\hspace{0pt}This paper develops the singular perturbation theory for a particular
discrete-time nonlinear system which models the saturating inductor buck
converter using cycle-by-cycle digital control.
\end{abstract}

\section{Introduction}
Singular perturbation theory \cite{tikhonov1952} is a well-known method for studying nonlinear systems with two well-separated time scales. The $\chi$ nonlinear system is a finite-dimensional, discrete-time nonlinear system which can be used to model a broad class of power electronics systems.

Although there exist several discrete versions of singular perturbation theory in the literature \cite{tzuu1999tac,bouyekhf1996reduced, litkouhi1984infinite, kafri1996stability, bouyekhf1997analysis}, to the best of authors' knowledge, there is no such theorem which can be directly applied to the $\chi$ nonlinear system. Therefore, in this paper, we establish singular perturbation theory for the $\chi$ nonlinear system. 
\\

\section{System description}
The $\chi$ \emph{nonlinear system} is a finite-dimensional, discrete-time nonlinear system whose state-space representation can be expressed as
\begin{align}
    \label{orgsysmodel1} x[n+1] &= x[n] + f(\mu z[n]), \\
    \label{orgsysmodel2} z[n+1] &= g(x[n], z[n], \mu z[n]),
\end{align}
where $\mu$ is called {\emph{perturbation parameter}}.
\\
\noindent Direct quote from \cite{xiao2019buck}:
\par
\begingroup
\leftskip4em
\rightskip\leftskip 
We assume the following assumptions hold for all $(n,x,z) \in [0,\infty) \times D_x \times D_z$ containing the origin for some domain $D_x \subset \mathbb{R}^n$ and $D_z \subset \mathbb{R}^m$:
\begin{enumerate*}[label=(\alph*)]
    \item the matrix $\partial g / \partial z - I_{m}$  is invertible;
    \item the function $f$ and $g$ are Lipschitz in $(x,z)$ with Lipschitz constant $L_f$ and $L_g$;
    \item $f(0) = 0$, $g(0,0,0) = 0$.
\end{enumerate*}

From assumption (a) above and the implicit function theorem \cite{edwards2012advanced}, the equation $z = g(x, z, 0)$ has explicit solution $z = h(x)$. We assume the function $h$ is Lipschitz in $x$ with Lipschitz constant $L_h$. We define the \emph{reduced model} by
%, where $D_x$ is a domain that contains $x = 0$.
\begin{align} \label{redumodel1}
     x_s[n+1] &= x_s[n] + f(\mu h(x_s[n])), \\
     z_s[n] &= h(x_s[n]).
\end{align}
The reduced model describes trajectories of $x$ and $z$ which an observer sees in the slow time frame when $\mu$ approaches 0.
We define the \emph{boundary-layer model} by
\begin{align} \label{boundumodel1}
     y[n+1] =& g(x_s[n], y[n] \nonumber \\
     &+ h(x_s[n]),0) - h(x_s[n]).
\end{align}
where $y[n] = z_f[n] - z_s[n]$. The $z_f[n]$ is the trajectory of $z$ which an observer sees in the fast time frame when $\mu$ approaches 0. The boundary-layer model describes the difference between the trajectory of $z$ which an observer sees in the fast time frame and that in the slow time frame. We note that $y(n) = 0$ is a solution for the boundary-layer model.
\par
\endgroup
\section{Theory}
The following Theorem \ref{Theorem:TractoryConvergence} shows the relationship between the trajectory of the original system and that of the reduced model as well as the boundary-layer model.\\
Direct quote from \cite{xiao2019buck}:
\begin{theorem}
\label{Theorem:TractoryConvergence}
\par
\begingroup
\leftskip4em
\rightskip\leftskip
If $x_s = 0$ is an exponentially stable equilibrium of system (\ref{redumodel1}) and $y = 0$ is an exponentially stable equilibrium of system (\ref{boundumodel1}), uniform in $x_s$, then there exists a positive constant $\mu^*$ such that for all $n \ge 0$ and $0< \mu \le \mu^{*}$, the singular perturbation problem (\ref{orgsysmodel1}) and (\ref{orgsysmodel2}) has a unique solution $x[n,\mu]$, $z[n, \mu]$ on $[0,\infty)$, and
$ x[n, \mu] - x_s[n, \mu] = O(\mu), z[n,\mu] - h(x_s[n,\mu]) - y[n] = O(\mu)$ hold uniformly for $n \in [0, \infty)$, where $x_s[n, \mu]$ and $y[n]$ are the solutions of the system (\ref{redumodel1}) and system (\ref{boundumodel1}). Furthermore, there exists $n_1 > 0$, such that $z[n,\mu] - h(x_s[n]) = O(\mu)$ holds uniformly for $n \in [n_1, \infty)$.
\par
\endgroup
\end{theorem}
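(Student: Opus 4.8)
The plan is to establish a discrete-time Tikhonov-type result by passing to error coordinates and building a composite Lyapunov function, replacing the time derivatives of the classical continuous-time argument with one-step differences. Existence and uniqueness come almost for free: (\ref{orgsysmodel1})--(\ref{orgsysmodel2}) is an explicit forward recursion, so once a priori bounds confine the iterates to $D_x \times D_z$ the solution $x[n,\mu], z[n,\mu]$ is determined and unique on $[0,\infty)$; I would extract these bounds from the same Lyapunov estimates that drive the $O(\mu)$ claims and therefore treat both together. The structural fact that makes the perturbation singular is that $x$ is slow: since $f(0)=0$ and $f$ is $L_f$-Lipschitz, $\abs{x[n+1]-x[n]} = \abs{f(\mu z[n])} \le \mu L_f \abs{z[n]}$, so $x$ drifts by $O(\mu)$ per step while $z$ relaxes on the per-step scale toward the quasi-steady-state manifold $z = h(x)$.

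I would then introduce the error variables $\eta[n] = x[n] - x_s[n]$ and $\xi[n] = z[n] - h(x[n])$ and compute their one-step maps. Using the Lipschitz constants $L_f, L_g, L_h$ and the $O(\mu)$ drift of $x$, I expect
\begin{align}
    \eta[n+1] &= \eta[n] + O\!\big(\mu(\abs{\eta[n]} + \abs{\xi[n]})\big), \nonumber \\
    \xi[n+1] &= g(x[n], \xi[n] + h(x[n]), 0) - h(x[n]) + O(\mu), \nonumber
\end{align}
so that to leading order $\eta$ inherits the exponentially stable reduced variational dynamics and $\xi$ inherits the boundary-layer dynamics with the frozen parameter $x_s$ replaced by the nearby true state $x[n]$. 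From the exponential stability of (\ref{redumodel1}) I would invoke a discrete-time converse Lyapunov theorem to obtain $V(x_s)$ with $c_1\abs{x_s}^2 \le V(x_s) \le c_2\abs{x_s}^2$, a one-step decrease $V(x_s[n+1])-V(x_s[n]) \le -c_3\mu\abs{x_s[n]}^2$ reflecting the $O(\mu)$-slow reduced flow, and a Lipschitz increment bound; exponential stability of (\ref{boundumodel1}) uniform in $x_s$ yields $W(x_s,y)$ with $b_1\abs{y}^2 \le W \le b_2\abs{y}^2$, an $O(1)$-rate decrease $W(x_s,y[n+1])-W(x_s,y[n]) \le -b_3\abs{y[n]}^2$, and a bound on the sensitivity of $W$ to its frozen argument $x_s$.

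The heart of the argument is a composite Lyapunov function $\nu = V + d\,W$ with a weight $d > 0$ to be fixed, whose one-step difference along the true trajectory I would estimate, tracking the two scales separately. The fast block $W$ contracts at an $O(1)$ per-step rate and is forced only by the $O(\mu)$ terms in the $\xi$-dynamics, already giving $\abs{\xi[n]} = O(\mu)$. The slow block $V$ contracts at the weak $O(\mu)$ per-step rate, but is forced only through the slow drift's dependence on the fast error, an $O(\mu)\cdot\abs{\xi} = O(\mu^2)$ term; balancing the $O(\mu)$ decay against the $O(\mu^2)$-order forcing again gives $\abs{\eta[n]} = O(\mu)$. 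The interaction terms that spoil the ideal decreases---the $O(\mu)$ mismatch between $z[n]$ and $h(x_s[n])$ seen by $V$, and the $x_s[n+1]-x_s[n]=O(\mu)$ motion of the frozen argument seen by $W$---are absorbed into the negative-definite part by Young's inequality, and it is precisely this absorption that fixes the admissible weight $d$ and the threshold $\mu^*$. Summing the resulting discrete inequalities (or invoking a discrete comparison lemma) gives $\abs{\eta[n]}, \abs{\xi[n]} = O(\mu)$ uniformly in $n$; rewriting $z - h(x_s) - y = \xi + (h(x)-h(x_s)) - y$ and reconciling the frozen-$x_s$ boundary layer with the moving-$x$ version (an $O(\mu)$ correction through $L_h$) produces the two stated uniform bounds, and the same estimates supply the a priori boundedness needed for existence and uniqueness. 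Finally, exponential stability of $y=0$ gives $\abs{y[n]} \le k\rho^{n}\abs{y[0]}$ with $\rho \in (0,1)$; choosing $n_1$ so that $k\rho^{n_1}\abs{y[0]} \le \mu$ forces $y[n]=O(\mu)$ for $n \ge n_1$, which with $z - h(x_s) = y + O(\mu)$ yields the refined estimate on $[n_1,\infty)$.

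I expect the main obstacle to be the two-scale bookkeeping in the discrete composite-Lyapunov difference. In continuous time the fast variable enters as $\epsilon\dot z$, which cleanly splits the Lyapunov derivative into an $O(1/\epsilon)$ fast part and an $O(1)$ slow part; here there is no such rescaling, so the single index $n$ must carry both the $O(1)$ fast contraction of $W$ and the $O(\mu)$ slow contraction of $V$, and the one-step differences of both functions pick up contributions from the $O(\mu)$ motion of $x$ and from evaluating $g$ at $\mu z$ rather than at $0$. Keeping the slow-block forcing at $O(\mu^2)$ while the fast-block forcing is $O(\mu)$, choosing the weight $d$ and the range $0<\mu\le\mu^*$ so that the mixed quadratic form is negative definite, and simultaneously guaranteeing the iterates never leave $D_x\times D_z$ so that assumptions (a)--(c) remain in force, is the delicate part of the proof.
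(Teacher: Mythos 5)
Your proposal is correct in outline but takes a genuinely different route from the paper. The paper's proof is entirely elementary: it establishes $\norm{x[n]-x_s[n]}\le\lambda\mu$ by induction on $n$ using only the Lipschitz constants $L_f,L_g,L_h$ and a bound $\norm{y[k]}\le\beta$ from the boundary-layer stability; it then obtains $z[n]-h(x_s[n])-y[n]=O(\mu)$ by a single chain of triangle inequalities applied to one step of the $z$-recursion; and it closes with the same choice $n_1=-\ln(\mu)/\theta$ that you make. There is no composite Lyapunov function, no converse Lyapunov theorem, and no Young's-inequality absorption anywhere in the paper's argument for this theorem (the sum $V+W$ appears only in the separate stability theorem). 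What your approach buys is exactly the point where the paper's induction is weakest: in the paper's inductive step the constant grows from $\lambda$ to $\lambda^*=\lambda+L_f\beta$ at every iteration, so the written bound degrades linearly in $n$, and the claimed uniformity on $[0,\infty)$ actually requires either the summability of $\norm{y[k]}$ coming from its exponential decay or, as in your plan, the contraction supplied by the reduced model's exponential stability through a converse Lyapunov function. Your two-scale bookkeeping --- $O(1)$ contraction of the fast block forced at $O(\mu)$, $O(\mu)$ contraction of the slow block forced at $O(\mu^2)$, with the weight $d$ and the threshold $\mu^*$ fixed by making the mixed quadratic form negative definite --- is the standard Tikhonov/Khalil mechanism and, unlike the paper, makes the origin of $\mu^*$ explicit. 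The price is that you must supply a discrete-time converse Lyapunov theorem with the stated increment and sensitivity bounds, which the paper's direct estimates avoid; your handling of existence and uniqueness (explicit forward recursion plus a priori confinement to $D_x\times D_z$) and of the refined estimate on $[n_1,\infty)$ coincides with the paper's.
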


\begin{proof}
We use mathematic induction to prove \footnote{In this paper, we assume the 2-norm $\|\cdot\| = \|\cdot\|_2$.}
\begin{align} \label{eqn:approxx}
\norm{x[n]-x_s[n]} \le \lambda \mu.
\end{align}
Equation (\ref{eqn:approxx}) holds when $n=1$ because
\begin{align}
    \norm{x[1]-x_s[1]} = \norm{f(\mu z_0)} \le L \norm{z_0} \mu.
\end{align}
We suppose that there exists $\lambda$ satisfying $\norm{x[n]-x_s[n]} \le \lambda \mu $ when $n = k$. The following derivations prove (\ref{eqn:approxx}) when $n=k+1$:
\begin{align}
    &\norm{x[k+1]-x_s[k+1]} \nonumber \\
     = & \, \norm{x[k] + f(\mu z[k])-x_s[k] - f(\mu x_s[k])} \nonumber \\
     \le & \, \norm{x[k]-x_s[k]} + L_f \mu \norm{z[k] - x_s[k]} \nonumber \\
     \le & \, \lambda \mu + L_f \mu \norm{y(k)} \nonumber \\
      = & \, (\lambda + L_f \beta ) \mu.
\end{align}
By defining $ \lambda^*  = (\lambda + L_f \beta)$, we show $\norm{x[k+1]-x_s[k+1]} = O(\mu) $.\\
By mathematic induction, we conclude that
\begin{align}
    \norm{x[n] - x_s[n]} = O(\mu).
\end{align}
Then we prove  $z[n,\mu] - h(x_s[n]) - y[n] = O(\mu)$.
\begin{align}
    &\norm {z[n] - h(x_s[n]) - y[n]} \nonumber \\
    = \; & \|g(x[n-1], z[n-1], \mu(z[n-1]) - h(x_s[n]) \nonumber \\
    & - g(x_s[n-1], y[n-1] \nonumber \\
    & + h(x_s[n-1]),0) + h(x_s[n-1])\| \nonumber \\
        \le \; & \| g(x[n-1], z[n-1], \mu(z[n-1])  \nonumber \\
    &- g(x[n-1], z[n-1], 0) \| \nonumber \\
    \; & + \; \norm{g(x[n-1], z[n-1], 0) - g(x_s[n-1], z[n-1], 0)} \nonumber \\
    \; & + \; \norm{ h(x_s[n-1]) - h(x_s[n])} \nonumber \\
    \le \; &L_g \mu \norm{z[n-1]} + L_g \norm{x[n-1]-x_s[n-1]} \nonumber \\
    & +  L_h L_f \mu \norm{x_s[n-1]} \nonumber \\
    \le \; &L_g \mu \; \left(\norm{x_s[n-1]}+ \norm{y[n-1]}\right) \nonumber \\
    &  +L_g \norm{x[n-1]-x_s[n-1]}  +  L_h L_f \mu \norm{x_s[n-1]} \nonumber \\
    \le \; & \left(L_g( \lambda + \alpha + \beta ) + L_hL_f\alpha \right)\mu.
\end{align}
We conclude that
\begin{align}
    z[n,\mu] - h(x_s[n]) - y[n] = O(\mu).
\end{align}
Finally, we prove that there exists $n_1 > 0$, such that
\begin{align}
    z[n] - h(x[n]) = O(\mu)
\end{align}
holds uniformly for $n \in [n_1, \infty)$.\\
Let $n_1 = -\text{ln}(\mu)/\theta$, for all $n > n_1 $, 
\begin{align}
    \norm{y[n]} \le \norm{y[n_1]} = \epsilon \norm{y[0]} e^{\theta n_1} = \epsilon \norm{y[0]} \mu.
\end{align}
We proved $\norm{y[n]} = O(\mu)$ for all $n > n_1 $. Therefore
$ z[n] - h(x[n]) = O(\mu)$ holds uniformly for $n \in [n_1, \infty)$.
\end{proof}

The following Theorem \ref{Theorem:Stability} shows the relationship between the stability of the original system and that of the reduced model as well as the boundary-layer model.
\begin{theorem}
\label{Theorem:Stability}
 There exists $\mu^* > 0$ such that for all $\mu \le \mu^*$, then $x = 0$, $z = 0$ is an exponentially stable equilibrium of the singular perturbation problem (\ref{orgsysmodel1}) and (\ref{orgsysmodel2}).
\end{theorem}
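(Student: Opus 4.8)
The plan is to prove exponential stability by constructing a composite Lyapunov function for the full system, combining a Lyapunov function for the slow (reduced) dynamics with one for the fast (boundary-layer) dynamics. I would emphasize at the outset that the trajectory approximation of Theorem~\ref{Theorem:TractoryConvergence} is by itself insufficient here: it only bounds $\norm{x[n]-x_s[n]}$ by $O(\mu)$, which leaves an $O(\mu)$ gap that does not shrink to zero, so a genuine Lyapunov argument is required.

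First I would invoke the converse Lyapunov theorem for exponentially stable discrete-time systems. Since $x_s = 0$ is exponentially stable for the reduced model (\ref{redumodel1}), there is a function $V(x)$ with $a_1 \norm{x}^2 \le V(x) \le a_2 \norm{x}^2$, a per-step decrease $V(x_s[n+1]) - V(x_s[n]) \le -a_3 \norm{x_s[n]}^2$ along (\ref{redumodel1}), and a gradient-type bound $|V(p)-V(q)| \le a_4(\norm{p}+\norm{q})\norm{p-q}$. Likewise, exponential stability of $y=0$ for (\ref{boundumodel1}), uniform in $x_s$, yields $W(x_s, y)$ with $b_1\norm{y}^2 \le W \le b_2\norm{y}^2$, decrease $-b_3\norm{y}^2$ along (\ref{boundumodel1}), and the standard sensitivity bound $|W(x', y) - W(x, y)| \le b_5\norm{y}^2\norm{x'-x}$ in the frozen slow argument. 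A structural observation I would record is that, because $x[n+1]-x[n] = f(\mu z[n]) = O(\mu)$, the reduced-model decrease constant can be taken as $a_3 = O(\mu)$, whereas the boundary-layer decrease constant $b_3$ is $O(1)$.

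Next I would change coordinates to $y[n] = z[n] - h(x[n])$ and rewrite (\ref{orgsysmodel1})--(\ref{orgsysmodel2}) in $(x,y)$. The $y$-update is then the boundary-layer update plus perturbations of size $O(\mu)$, arising from the $\mu z[n]$ argument of $g$, from replacing $x_s$ by $x$, and from $h(x[n+1]) - h(x[n])$, all controlled by $L_g, L_h, L_f$. Setting $\nu(x,y) = (1-d)V(x) + d\,W(x,y)$ for a weight $d \in (0,1)$, I would compute $\Delta\nu = \nu[n+1] - \nu[n]$ along the full trajectory. The $V$-part reproduces $-(1-d)a_3\norm{x}^2$ up to the discrepancy between $f(\mu(h(x)+y))$ and $f(\mu h(x))$, which the gradient bound controls by $O(\mu)\norm{x}\norm{y}$; the $W$-part reproduces $-d\,b_3\norm{y}^2$ up to cross terms of size $O(\mu)(\norm{x}\norm{y} + \norm{y}^2)$. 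Collecting everything gives a quadratic form in $(\norm{x},\norm{y})$ with diagonal entries $-(1-d)\,O(\mu)$ and $-d\,O(1)$ and off-diagonal entry $O(\mu)$.

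The conclusion then follows by fixing $d$ (e.g. $d=\tfrac{1}{2}$) and choosing $\mu^*$ small: the off-diagonal term is $O(\mu)$, so its square is $O(\mu^2)$, which is dominated by the product of the diagonal entries $(1-d)\,O(\mu)\cdot d\,O(1) = O(\mu)$, making the form negative definite and yielding $\Delta\nu \le -c_3\norm{(x,y)}^2$ together with $c_1\norm{(x,y)}^2 \le \nu \le c_2\norm{(x,y)}^2$. Exponential stability of $(x,y)=0$ then follows from the discrete-time Lyapunov theorem, and since $z = h(x)+y$ with $h$ Lipschitz and $h(0)=0$, this transfers to exponential stability of $(x,z)=(0,0)$. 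I expect the main obstacle to be exactly this scale-matching: the slow subsystem contributes only an $O(\mu)$ negative term, so one must verify that every fast-to-slow coupling term carries an explicit factor of $\mu$---which it does precisely because the whole $x$-increment is $O(\mu)$---ensuring the cross terms stay higher order than the slow decrease they must not overwhelm.
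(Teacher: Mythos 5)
Your proposal is correct, but it takes a genuinely different --- and more self-contained --- route than the paper. The paper's proof forms the composite function $\nu = V + W$ and verifies the decrement $\nu[n+1] \le \max\{\sigma_1,\sigma_2\}\,\nu[n]$ only along the \emph{decoupled} reduced model (\ref{redumodel1}) and boundary-layer model (\ref{boundumodel1}); it then transfers exponential stability to the full system (\ref{orgsysmodel1})--(\ref{orgsysmodel2}) by ``considering Theorem \ref{Theorem:TractoryConvergence}'' and defers a rigorous version to the methods of \cite{bof2018lyapunov}. You correctly identify that this transfer step is the weak point: an $O(\mu)$ trajectory approximation does not by itself propagate exponential stability, since the error does not decay to zero. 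Your proposal supplies exactly the missing interconnection analysis --- passing to the coordinates $y = z - h(x)$, computing $\Delta\nu$ along the \emph{full coupled} dynamics, and checking that every cross term between the slow and fast parts carries an explicit factor of $\mu$ (because the $x$-increment $f(\mu z)$ is itself $O(\mu)$), so that the off-diagonal entry $O(\mu)$ of the resulting quadratic form is dominated by the product of the diagonal entries $(1-d)\,O(\mu)\cdot d\,O(1)$. This is the standard composite-Lyapunov argument for singularly perturbed systems and is what the paper's cited ``more rigorous proof'' would amount to. The paper's version is shorter and modular but incomplete as written; yours costs more bookkeeping (converse-Lyapunov gradient bounds, the sensitivity of $W$ in its frozen argument, the scale observation $a_3 = O(\mu)$ versus $b_3 = O(1)$) but actually closes the argument. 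One small caution: since the converse Lyapunov function $V$ for (\ref{redumodel1}) may itself depend on $\mu$, you should either fix the $\mu$-scaling of its constants explicitly or note that the final decrement constant $c_3$ is allowed to be $O(\mu)$, which still yields exponential stability for each fixed $\mu \le \mu^*$ as the theorem requires.
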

\begin{proof}
The exponential stability of the $x = 0$ of system (\ref{redumodel1}) is equivalent to
\begin{align}
    \label{eqn:lyapfuncx1} &\gamma_1 \norm{x}^2 \le V(x) \le \gamma_2 \norm{x}^2, \\
    \label{eqn:lyapfuncx2} &V(x[n+1]) \le \sigma_1 V(x[n]),
\end{align}
where $\gamma_1>0$, $\gamma_2>0$, $0 < \sigma_1 < 1$.
The exponential stability of the equilibrium $y = 0$ of system (\ref{boundumodel1}) requires the following equations to hold uniformly in $x$
\begin{align}
    \label{eqn:lyapfuncy1} &\gamma_3 \norm{y}^2 \le W(y) \le \gamma_4 \norm{y}^2,\\
    \label{eqn:lyapfuncy2} &W(y[n+1]) \le \sigma_2 W(y[n]),
\end{align}
where $\gamma_3>0$, $\gamma_4>0$, $0 < \sigma_2 < 1$.\\
We use $\nu = V + W$ as a Lyapunov function candidate for the system (\ref{redumodel1}) and (\ref{boundumodel1}). From (\ref{eqn:lyapfuncx1}), (\ref{eqn:lyapfuncx2}), (\ref{eqn:lyapfuncy1}) and (\ref{eqn:lyapfuncy2})
%(\ref{eqn:lyapfuncx1,eqn:lyapfuncx2,eqn:lyapfuncy1,eqn:lyapfuncy2})
\begin{align}
   \label{eqn:lyapfuncz1} &\text{min}\{\gamma_1, \gamma_3\} \norm{[x,y]}^2 \le \nu(x,y)\le \text{max}\{\gamma_2, \gamma_4\} \norm{[x,y]}^2, \\
   \label{eqn:lyapfuncz2} &\nu(x[n+1], y[n+1]) \le \text{max}\{\sigma_1, \sigma_2\}\nu(x[n], y[n]).
\end{align}
From (\ref{eqn:lyapfuncz1}) and (\ref{eqn:lyapfuncz2}), the equilibrium $[x,y] = 0$ of the systems (\ref{redumodel1}) and (\ref{boundumodel1}) is exponentially stable. Considering Theorem \ref{Theorem:TractoryConvergence}, we can conclude that the equilibrium $[x,y] = 0$ of the systems (\ref{orgsysmodel1}) and (\ref{orgsysmodel2}) is exponentially stable.
A more rigorous proof can be performed by following the same methods as the proofs of Proposition 8.1 and Proposition 8.2 in \cite{bof2018lyapunov}. 
\end{proof}

\section{Conclusion}
The theoretical contribution of this paper is developing singular perturbation theory for the $\chi$ nonlinear system.

\bibliographystyle{ieeetr}
%\bibliography{IEEEabrv}

\end{document}